\newtheorem{theorem}{Theorem}[section]
\newtheorem{corollary}[theorem]{Corollary}
\newtheorem{definition}[theorem]{Definition}
\newtheorem{example}[theorem]{Example}
\newtheorem{remark}[theorem]{Remark}
\newcommand{\C}{\ensuremath{\mathbb{C}}}
\newcommand{\R}{\ensuremath{\mathbb{R}}}
\newcommand{\Z}{\ensuremath{\mathbb{Z}}}
\newcommand{\Poli}{\ensuremath{\Gamma_{+}}}
\newcommand{\On}{\ensuremath{\mathcal{O}_{n}}}
\newcommand{\OX}[1]{\ensuremath{\mathcal{O}_{X(#1)}}}
\newcommand{\coned}[1]{\ensuremath{c\widecheck{on}e(#1)}}
\begin{document}
	
	\title[Bi-Lipschitz Invariants in Singularity Theory: \L ojasiewicz Exponent and Euler Obstruction
	]{Bi-Lipschitz Invariants in Singularity Theory: \L ojasiewicz Exponent and Euler Obstruction
	}
	\author[A. S. Araújo]{Amanda S. Araújo}
	\address{Departamento de Matemática, Universidade Federal de São Carlos - UFSCar, Brazil}
	\email{amandaaraujo@estudante.ufscar.br}
	
	\author[T. M. Dalbelo]{Thaís M. Dalbelo}
	\address{Departamento de Matemática, Universidade Federal de São Carlos - UFSCar,Brazil}
	\email{thaisdalbelo@ufscar.br}
	
	\author[T. da Silva]{Thiago da Silva}
	\address{Departamento de Matemática, Universidade Federal do Espírito Santo - UFES, Brazil}
	\email{thiago.silva@ufes.br}

	\thanks{{\it 2000 Mathematics Subject
			Classification}: 32S15, 32S05, 14M25. \\
		\mbox{\hspace{11pt}}{\it Key words}:  Bi-Lipschitz equivalence, Łojasiewicz exponent, Euler obstruction.\\
		\mbox{\hspace{11pt}} Amanda S. Araújo is supported by CAPES grant number 88887.827300/2023-00.
		Thaís M. Dalbelo is supported by FAPESP Grants 2019/21181-0 and 2024/22060-0, and by CNPq Grant 403959/2023-3. Thiago da Silva is funded by CAPES grant number 88887.909401/2023-00, and by FAPES (Universal: Edital FAPES 13/2025).}

	\begin{abstract}
		In this work, we investigate the bi-Lipschitz invariance of two fundamental local invariants in singularity theory: the Łojasiewicz exponent and the local Euler obstruction. We draw inspiration from Bivià-Ausina and Fukui, whose framework we extend to ideals in rings of analytic functions defined on affine toric varieties. We establish conditions under which these invariants remain unchanged under bi-Lipschitz equivalence. We also provide an answer, to a particular case, to the open question of whether the local Euler obstruction is a bi-Lipschitz invariant. For hypersurfaces with isolated singularities, we show that the Euler obstruction is preserved under non-degeneracy conditions. These results contribute to the understanding of metric invariants in complex analytic geometry.
	\end{abstract}
	\maketitle
	\tableofcontents
	
	\section*{Introduction}
	
	Bi-Lipschitz invariants have become a central focus in singularity theory, offering insights into the geometric and metric properties of analytic varieties beyond their topological and algebraic classifications. Many fundamental invariants, such  as multiplicity, order, and the \L ojasiewicz exponent, have been extensively studied under bi-Lipschitz equivalence. A central question in this context is the metric version of Zariski's multiplicity conjecture, which asks whether multiplicity is preserved under bi-Lipschitz homeomorphisms. Significant contributions by many authors, including L. Birbrair, J. F. Bobadilla, C. Bivià-Ausina, A. Fernandes, T. Fukui, Z. Jelonek, M. Pe Pereira, and J. E. Sampaio, among others, have advanced our understanding of this problem, revealing deep connections between metric geometry and algebraic invariants. Despite these developments, the behavior of more subtle invariants under bi-Lipschitz equivalence remains less understood.
	
	In this work, we investigate the bi-Lipschitz invariance of two fundamental local invariants in singularity theory: the Łojasiewicz exponent and the local Euler obstruction. Based on the work of Bivià-Ausina and Fukui \cite{carlesfukui}, whose arguments also underpin our proofs, we establish algebraic conditions ensuring the preservation of these invariants. While their results are formulated for ideals in $\mathcal{O}_n$, we extend the setting to ideals in $\mathcal{O}_{X(S)}$, where $(X(S),0)$ is a germ of an affine toric variety associated with a semigroup $S$. Within this framework, we adapt their techniques and derive new conditions under which these invariants remain unchanged under bi-Lipschitz equivalence.

	The status of the local Euler obstruction remains more elusive. This invariant, introduced by MacPherson in the context of Chern classes for singular spaces, is known to be analytic but not a topological invariant. In this paper, we provide a partial answer to the open question of its bi-Lipschitz invariance, demonstrating that under some algebraic conditions for hypersurfaces with isolated singularities, the local Euler obstruction is indeed preserved. This result contributes positively to the broader understanding of metric invariants in complex analytic geometry.
	
	The paper is organized as follows. Section~1 introduces the foundational definitions and notations concerning bi-Lipschitz equivalence, the Łojasiewicz exponent, and the local Euler obstruction and toric varieties. Section~2 extends the bi-Lipschitz invariance of the Łojasiewicz exponent to ideals in $\mathcal{O}_{X(S)}.$ Finally, Section~3 addresses the bi-Lipschitz invariance of the local Euler obstruction for hypersurfaces with isolated singularities under some algebraic assumptions.

		\section{Preliminary Notions and Results}
		
		We fix the notational conventions used throughout this work. Let $(X,x_0)$ be a germ of analytic space in $\C^n$. Consider two real-valued function germs $ a,b:(X, x_0) \to \mathbb{R}$. The relation $a(x) \lesssim b(x)$ near $x_0$ means that there exists a constant $ C > 0 $ and an open neighborhood $U$ of $ x_0$ in $X$ such that
		$$a(x) \leq C \cdot b(x), \quad \text{for all } x \in U.$$
		
		Furthermore, we write  $a(x) \sim b(x)$ near $ x_0$ whenever both $a(x) \lesssim b(x)$ and $b(x) \lesssim a(x)$ hold in some neighborhood of $x_0$.
		
		For any $n$-tuple $ x = (x_1, \ldots, x_n) \in \mathbb{C}^n$, we denote its norm by
		$$\|x\| = \sqrt{|x_1|^2 + \cdots + |x_n|^2}.$$
		
		\subsection{Bi- Lipschitz equivalences}
		
		We start by recalling the definition of a bi-Lipschitz map.
		
		\begin{definition} A map germ $f : (X,0) \to (Y,0)$ is Lipschitz if 
			$$
			\|f(x)- f(x')\| \lesssim \,  \|x- x'\| \text{ near }0.
			$$
			A bi-Lipschitz homeomorphism between $(X,0)$ and $(Y,0)$ is a Lipschitz homeomorphism $h : (X,0) \to (Y,0)$ whose inverse $h^{-1} : (Y,0) \to (X,0)$ is also Lipschitz. Moreover, two subsets $X_1$ and $X_2$ of $(X,0)$ are bi-Lipschitz equivalent if there is a bi-Lipschitz homeomorphism $\varphi : (X, 0) \to (X, 0)$ so that $\varphi(X_1) = X_2$.
		\end{definition}

		Two function germs $f,g : (X, 0) \to (\C^p, 0)$ are called bi-Lipschitz $\mathcal{R}$-equivalent if there exists a bi-Lipschitz map germ $\phi: (X, 0)\to  (X, 0)$ such that 
		$$f = g \circ \phi.$$
		
		Similarly, $f$ and $g$ are \emph{bi-Lipschitz $\mathcal{A}$-equivalent} if there exist bi-Lipschitz homeomorphisms $\varphi : (X,0) \to (X,0)$ and $\phi : (\C^p,0) \to (\C^p,0)$ such that
		$$\phi \circ f = g \circ \varphi.$$
		
		We also say that $f$ and $g$ are \emph{bi-Lipschitz $\mathcal{K}^*$-equivalent} if there is a bi-Lipschitz homeomorphism $\varphi : (X,0) \to (X,0)$ and a map $A : (X,0) \to GL(\mathbb{C}^p)$ such that $A$ and $A^{-1}$ are Lipschitz and     $$ A(x)f(x) = g(\varphi(x))$$
		for all $x$ in some open neighborhood of $0 \in X$, where $GL(\mathbb{C}^p)$ denotes the set of all $p\times p$ complex matrices with nonzero determinant.
		
		In particular, bi-Lipschitz $\mathcal{R}$-equivalence implies both bi-Lipschitz $\mathcal{A}$-equivalence and bi-Lipschitz $\mathcal{K}^\ast$-equivalence.
		
		The notion of \emph{bi-Lipschitz equivalence of ideals}, introduced by Bivià-Ausina and Fukui in~\cite{carlesfukui}, is originally defined for ideals in $\mathcal{O}_n$, by comparing their generators via a bi-Lipschitz homeomorphism $\varphi : (\mathbb{C}^n, 0) \to (\mathbb{C}^n, 0)$. We extend this notion to ideals in $\mathcal{O}_X$, where $X$ denotes a germ of analytic space in $\mathbb{C}^n$, and $\mathcal{O}_X$ denotes the ring of germs of analytic functions $(X,0)\to (\C,0)$.
		
		\begin{definition}
			Let $I$ and $J$ be ideals of $\mathcal{O}_X$. We say that $I$ and $J$ are bi-Lipschitz equivalent if there exist families of functions $f_1, \dots, f_p$ and $g_1, \dots, g_q$ in $\mathcal{O}_X$ such that:
			\begin{enumerate}
				\item $\langle f_1, \dots, f_p \rangle \subseteq I$ and $\overline{\langle f_1, \dots, f_p \rangle} = \overline{I}$; 
				\item $\langle g_1, \dots, g_q \rangle \subseteq J$ and $\overline{\langle g_1, \dots, g_q \rangle} = \overline{J}$;
				\item There exists a bi-Lipschitz homeomorphism $\varphi : (X, 0) \to (X, 0)$ such that
				$$\left\| (f_1(x), \dots, f_p(x)) \right\| \sim \left\| (g_1(\varphi(x)), \dots, g_q(\varphi(x))) \right\| \quad \text{near } 0,$$
			\end{enumerate}
			where $\overline{I}$ denotes the closure of the ideal $I$.
		\end{definition}
		
		It follows directly from the definition that bi-Lipschitz equivalence between two ideals implies bi-Lipschitz equivalence between their respective zero sets. Moreover, if two germs are bi-Lipschitz $\mathcal{R}$-equivalent, then their zero sets are bi-Lipschitz equivalent.
		
		\subsection{ \L ojasiewicz exponent of ideals}
		The \L ojasiewicz exponent is a fundamental numerical invariant that quantifies the comparative vanishing order of two ideals near a singular point. As shown in \cite{L-T}, this exponent can be characterized algebraically through inclusions between powers of ideals and their integral closures. In this subsection, we define the Łojasiewicz exponent $\mathcal{L}_J(I)$ and explore its properties, highlighting its connections with other local invariants such as the Hilbert–Samuel multiplicity and the order of an ideal.
		
		\begin{definition}
			Let $X$ be a reduced equidimensional complex analytic space. Let $I := \langle f_1, \dots, f_p \rangle$ and $J := \langle g_1, \dots, g_q \rangle$ be two ideals in $\mathcal{O}_X$.
			Let us consider the maps 
			$$f = (f_1, \dots, f_p) : (X, 0) \to (\mathbb{C}^p, 0)$$
			$$g = (g_1, \dots, g_q) : (X, 0) \to (\mathbb{C}^q, 0).$$
			
			The \L{}ojasiewicz exponent of $I$ with respect to $J$, denoted by $\mathcal{L}_J(I)$, is the infimum of the set
			\begin{equation*}
				\{\alpha \in \mathbb{R}_{\geq 0} : \|g(x)\|^\alpha \lesssim \|f(x)\| \text{ near } 0\}.
			\end{equation*}
			
			If this set is empty, we say that $\mathcal{L}_J(I) = \infty$. 
		\end{definition}
		
		It is worth noting that $\mathcal{L}_J(I) < \infty$ whenever $I \subseteq \sqrt{J}$. Furthermore, according to \cite{L-T}, if $\mathcal{L}_J(I) < \infty$, then it's a rational positive number.\\
		
		Consider two ideals $I, J \subseteq \mathcal{O}_{X}$. By \cite{L-T}, the \L{}ojasiewicz exponent of $I$ with respect to $J$ is given by
		\begin{equation*}
			\mathcal{L}_J(I) = \min \left\{ \frac{a}{b} \ \middle| \ a, b \in \mathbb{Z}_{\geq 1}, \quad J^a \subseteq \overline{I^b} \right\}.
		\end{equation*}

		Let us assume that the ideal  $I \subseteq \mathcal{O}_X$ has finite colength, that is, the quotient $\mathcal{O}_{X}/I$ is a finite-dimensional $\mathbb{C}$-vector space, since $\mathcal{O}_{X}$ is a $\mathbb{C}$-algebra with residue field $\mathbb{C}$.
		
		In this context, when $J = \mathfrak{m}$ is the maximal ideal of $\mathcal{O}_X$, we denote the number $ \mathcal{L}_J(I) $ by $ \mathcal{L}_0(I) $. That is,
		\begin{equation*}
			\mathcal{L}_0(I) = \inf \left\{ \frac{a}{b}\in \mathbb{Z}_{\geq 1} : \quad \mathfrak{m}^a \subseteq \overline{I^b} \right\}.
		\end{equation*}
		
		The quantity $\mathcal{L}_0(I)$ is called the \emph{Łojasiewicz exponent} of the ideal $I$.
		
		We recall that the \emph{order} of a function $f \in \mathcal{O}_{X}$, denoted by $\operatorname{ord}(f)$, is defined as the largest integer $r \in \mathbb{Z}_{\geq 1}$ such that $f \in \mathfrak{m}^r$. Accordingly, the order of an ideal $I \subset \mathcal{O}_X$ is defined as $ \max \{r: I \subseteq \mathfrak{m}^r\}$.
		
		The Hilbert–Samuel multiplicity of a finite-codimensional ideal $I$ in $\mathcal{O}_X$ is defined as  
		$$e(I) = \lim_{k \to \infty} \frac{n!}{k^n} \, \ell(\mathcal{O}_X / I^k \mathcal{O}_X),$$  
		where $\ell(\mathcal{O}_X / I^k \mathcal{O}_X)$ denotes the colength of $I^k$ (see \cite{Matsumura}). There is a relationship between the Łojasiewicz exponent, the multiplicity of an ideal, and its order. This relation follows from the inequality $e(\mathfrak{m}^a) \geq e(\overline{I^b}) = e(I^b)$ (see \cite{Gaffney1996}), which implies that $a^n \geq b^n e(I)$, and consequently,  $\mathcal{L}_0(I)^n \geq e(I).$ Furthermore, the inclusion $I \subseteq \mathfrak{m}^{\operatorname{ord}(I)}$ yields  $e(I) \geq e(\mathfrak{m}^{\operatorname{ord}(I)}) = \operatorname{ord}(I)^n e(\mathfrak{m}).$  Therefore, we obtain the following inequality:
		\begin{equation}\label{eq1}
			\mathcal{L}_0(I)^n \geq e(I) \geq \operatorname{ord}(I)^n.
		\end{equation}
		
		\subsection{The local Euler obstruction}

		The local Euler obstruction was defined by MacPherson in 
		\cite{MacPherson} as a tool to prove the conjecture about the existence and 
		unicity of the Chern classes in the singular case. Since then, it has been extensively investigated by many
		authors such as Brasselet and Schwartz \cite{BS}, L\^{e} and Teissier \cite{LT}, Kashiwara \cite{Kashiwara}, and others.

	Despite its importance, the local Euler obstruction is generally hard to compute from its definition. This has motivated the search for effective formulas that allow one to handle it in concrete situations.
		
		In \cite[Theorem 3.1]{BLS}, Brasselet, L\^e, and Seade proved a formula to compute the local Euler obstruction using generic linear forms. For this purpose, we first present the definition of generic linear forms.
		
			Let $(X,0) \subset (\mathbb{C}^n,0)$ be a pure-dimensional complex analytic subset $X \subset U$ of an open set $U \subset \mathbb{C}^n$. We consider a complex analytic Whitney stratification $\mathcal{V} = \{V_i\}$ of $U$ adapted to $X$ (i.e. $X$ is a union of strata) and we assume that $\{0\}$ is a stratum. We choose a representative $X$ small enough of $(X,0)$ such that $0$ belongs to the closure of all the strata. We write $X= \cup_{i=0}^q V_i$ where $V_0 = \{0\}$ and $V_q = X_{\rm reg}$ is the set of regular points of $X$. We assume that the strata $V_0,\ldots,V_{q-1}$ are connected. Note that the closures $\overline{V_0},\ldots,\overline{V_{q-1}}$ are complex analytic subsets of $U$.
		
		\begin{definition}
			A generic complex linear form (with respect to $X$) is a complex linear form 
			$\ell : U \to \mathbb{C}$ such that $0 \in \ell^{-1}(0)$ and $\ker(\ell)$ is 
			transversal to all limits of tangent spaces $\{T_{x_n}V_i\}$, for every stratum 
			$V_i$ and every sequence $\{x_n\} \subset V_i$ converging to $0$.
		\end{definition}
		
		\begin{theorem}\label{BLS}
			Let $(X,0)$ and $\mathcal{V}$ be given as before, then for each generic 
			linear form $l$, there exists $\varepsilon_0$ such that for any $\varepsilon$ with 
			$0<\varepsilon<\varepsilon_0$ and $\delta\neq0$ sufficiently small, the Euler 
			obstruction of $(X,0)$ is equal to 
			\[
			{\rm Eu}_X(0)=\sum^{q}_{i=1}\chi(V_i\cap B_{\varepsilon}\cap l^{-1}(\delta)) \cdot 
			{\rm Eu}_{X}(V_i),
			\]
			where $\chi$ is the Euler characteristic, ${\rm Eu}_{X}(V_i)$ is the Euler 
			obstruction of $X$ at a point of $V_i, \ i=1,\ldots,q$ and 
			$0<|\delta|\ll\varepsilon\ll1$. 
		\end{theorem}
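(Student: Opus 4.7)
The plan is to exploit the obstruction-theoretic definition of $\mathrm{Eu}_X(0)$ and replace the radial form $\tilde{w}$ with a form built from a generic linear form $l$, then compute the resulting obstruction stratum by stratum. First, I would show that the pullback to $\tilde{X}$ of the form associated with $l$ (namely $\tilde{\omega}_l = $ the restriction to $\tilde{X}$ of the pullback of $d\,\mathrm{Re}(l)$ or equivalently $d|l|^2$ from $U$) defines a section of the dual Nash bundle $\tilde{\mathcal{T}}^*$ which is homotopic to $\tilde{w}$ as a nonvanishing section over $\nu^{-1}(S_{\varepsilon})$, provided $l$ is generic and $\varepsilon_0$ is small enough. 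Genericity of $l$ is used in an essential way: by choosing $l$ transverse (in the sense of Whitney) to the stratification, one guarantees that $d\,\mathrm{Re}(l)$ restricted to each stratum has no critical points on a small enough sphere, so a straight-line homotopy with $\tilde{w}$ can be performed without introducing zeros. Consequently, $\tilde{w}$ and $\tilde{\omega}_l$ represent the same obstruction class in $\mathbb{H}^{2d}(\nu^{-1}(B_\varepsilon),\nu^{-1}(S_\varepsilon),\mathbb{Z})$, and the computation of $\mathrm{Eu}_X(0)$ reduces to counting zeros of the $l$-section with their local indices.

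Next, I would compute the obstruction of $\tilde{\omega}_l$ by decomposing $\tilde{X}$ according to the preimages $\nu^{-1}(V_i)$ of the strata, exploiting the fact that the Nash modification is locally trivial over each open stratum. Since $l$ is generic, after a small generic perturbation the zero locus of $\tilde{\omega}_l$ lies over the union of the critical loci of $l|_{V_i}$ inside $B_\varepsilon$. By stratified Morse theory, these critical points are nondegenerate on each $V_i$, and the Poincaré--Hopf type sum of their indices over the stratum $V_i$ assembles precisely to $\chi(V_i\cap B_\varepsilon \cap l^{-1}(\delta))$ once one passes to the regular level $l^{-1}(\delta)$ for $0 < |\delta| \ll \varepsilon \ll 1$; equivalently, the stratum contribution is detected by the slice intersection with a nearby regular level.

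To upgrade this count to the correct local index in the Nash bundle, I would use a local proportionality argument near each stratum: on a small transverse slice through a point of $V_i$, the pair $(\tilde{X},\nu)$ is, up to stratified homeomorphism of Thom--Mather type, a product of $V_i$ with a normal slice, and the obstruction computed on the fiber (the normal slice) is by definition $\mathrm{Eu}_X(V_i)$. Multiplying the stratum-wise Euler characteristic by this fiber factor and summing over $i=1,\ldots,q$ yields the announced formula, with the $V_0 = \{0\}$ term contributing trivially since its slice is empty for $\delta\neq 0$.

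The main obstacle will be the first step: producing a genuine homotopy in $\tilde{\mathcal{T}}^*$ between $\tilde{w}$ and $\tilde{\omega}_l$ that stays nonvanishing on $\nu^{-1}(S_\varepsilon)$. This is where Whitney's condition (a) is indispensable, because it ensures that limits of tangent planes to $X_{\mathrm{reg}}$ along sequences approaching a lower stratum $V_i$ contain $T_x V_i$, which is precisely what forces both $\tilde{w}$ and $\tilde{\omega}_l$ to remain nonzero on $\nu^{-1}(V_i \cap S_\varepsilon)$ for generic $l$. The second technical difficulty, namely the local product decomposition of $\tilde{X}$ near lower strata that identifies the fiber contribution with $\mathrm{Eu}_X(V_i)$, is a standard but delicate consequence of Thom--Mather theory for Whitney stratified sets; it is at this juncture that the intrinsic definition of $\mathrm{Eu}_X(V_i)$ must be reconciled with what the obstruction computation naturally produces on each transverse slice.
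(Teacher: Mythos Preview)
The paper does not prove this statement at all: it is quoted verbatim from Brasselet, L\^{e} and Seade \cite[Theorem~3.1]{BLS} as background material, with no argument supplied. So there is no ``paper's own proof'' to compare against.

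That said, your outline is a faithful sketch of the strategy in the original source. The key ingredients you identify---replacing the radial section $\tilde w$ by a section built from a generic linear form, using Whitney condition~(a) to control limits of tangent planes and thereby guarantee nonvanishing on $\nu^{-1}(S_\varepsilon)$, and invoking a local product (normal slice) structure to identify the fibre contribution with $\mathrm{Eu}_X(V_i)$---are exactly the ones used by Brasselet, L\^{e} and Seade. One small point: the homotopy step you flag as the main obstacle is handled in \cite{BLS} not by a naive straight-line homotopy of forms but via the construction of a stratified radial vector field and an appeal to the Brasselet--Schwartz proportionality theorem, which is somewhat more delicate than what you describe; your sketch would need to be sharpened there to become a complete argument. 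But as a plan it is sound, and it matches the literature rather than diverging from it.
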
 
		
		The local Euler obstruction is known to be an analytic invariant, meaning it remains unchanged under analytic equivalence. However, it is not a topological invariant, as it can vary under homeomorphisms (see \cite{teseNivaldo}). Whether the local Euler obstruction is a bi-Lipschitz invariant remains an open question. In this work, we provide a partial answer to this problem, showing that under certain conditions, the local Euler obstruction is indeed preserved by bi-Lipschitz transformations. Thus, our result offers a positive contribution toward understanding its behavior in the bi-Lipschitz category.
		This leads naturally to the study of polar multiplicities, which are closely linked to the local Euler obstruction and play a central role in its computation and geometric interpretation.

		The next theorem,  proved by L\^e and Teissier, presents a way to compute the local Euler obstruction with the aid of Gonz\'ales-Sprinberg's purely algebraic interpretation of this invariant.
		
		\begin{theorem}[\cite{LT}] \label{lt} Let $X \subset \mathbb{C}^{n+1}$ be an analytic space of dimension $d$ reduced at $0$. Then ${\rm Eu}_{0}(X) = \sum_{k=0}^{d-1}(-1)^{d-k-1} m_k(X)$, where $m_k(X)$ denotes the $k$-th polar multiplicity of $X$ at $0$.
		\end{theorem}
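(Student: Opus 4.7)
The plan is to combine Gonzalez-Sprinberg's algebraic reinterpretation of the local Euler obstruction with the theory of polar varieties. The definition given above is obstruction-theoretic: it pairs the Nash bundle section $\tilde w$ against a relative fundamental class. Gonzalez-Sprinberg showed that this pairing may be rewritten as an intersection number on the Nash modification $\nu:\tilde X\to X$, namely as the degree of the top Chern class of the Nash bundle $\tilde{\mathcal T}$ (or, equivalently, an appropriate Segre class of its dual) evaluated against the fundamental cycle of $\nu^{-1}(0)$. This step converts the topological definition into a piece of intersection theory on $\tilde X$, which is the language in which polar multiplicities live.

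Second, I would introduce the polar varieties. Fixing a sufficiently generic flag of linear subspaces of $\C^{n+1}$, the $k$-th polar variety $P_k(X)$ is the closure in $X$ of the critical locus of the restriction to $X_{\mathrm{reg}}$ of the linear projection $\C^{n+1}\to\C^{d-k}$, and $m_k(X)$ is its Hilbert--Samuel multiplicity at $0$. The polar varieties lift naturally to $\tilde X$ as intersections of the zero-scheme of generic sections of $\tilde{\mathcal T}^{*}$ with pullbacks of hyperplanes, so their cycle classes can be expressed inside the Chow group of $\tilde X$ in terms of the Chern classes of $\tilde{\mathcal T}$ and a hyperplane class $h$.

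The third step is an intersection-theoretic inversion. Using the above description of $[P_k(X)]$ on $\tilde X$, one derives a dictionary between the Chern--Mather classes $c_j^M(X)=\nu_{*}c_j(\tilde{\mathcal T})\cap[\tilde X]$ and the polar classes $[P_k(X)]$, with explicit binomial coefficients of the form $\binom{d-k+j}{j}$ coming from expanding products of $c_1(\tilde{\mathcal T})$ and $h$. Substituting this dictionary into the Gonzalez-Sprinberg intersection number and pushing down to the point $0$, one finds that only zero-dimensional contributions survive, the binomial coefficients collapse by a standard combinatorial identity, and what remains is exactly the alternating sum $\sum_{k=0}^{d-1}(-1)^{d-k-1}m_k(X)$.

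The main obstacle is establishing the Chern--Mather--polar dictionary with the correct signs and multiplicities. This is the step that genuinely uses the full Whitney stratification hypothesis: genericity of the flag is needed both to ensure that the polar varieties have the expected dimension and multiplicity, and to identify their strict transforms with transverse intersections of zero-loci of generic sections of $\tilde{\mathcal T}^{*}$ on $\tilde X$. Once the dictionary is available, evaluating on $\nu^{-1}(0)$ and extracting the alternating sum is essentially formal bookkeeping.
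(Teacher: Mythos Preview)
The paper does not give its own proof of this statement: Theorem~\ref{lt} is quoted verbatim from \cite{LT} and used as a black box, with the surrounding text only remarking that L\^e and Teissier proved it ``with the aid of Gonz\'alez-Sprinberg's purely algebraic interpretation'' of the Euler obstruction. There is therefore nothing in the paper to compare your argument against.

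That said, your outline is a faithful high-level sketch of the original L\^e--Teissier proof: you correctly identify the three structural ingredients (Gonz\'alez-Sprinberg's intersection-theoretic reformulation on the Nash modification, the lift of polar varieties to $\tilde X$ as zero-loci of generic sections of $\tilde{\mathcal T}^{*}$, and the Chern--Mather/polar-class inversion that collapses to the alternating sum). As written it is a roadmap rather than a proof---the genericity and transversality assertions in your second and third steps, and the precise combinatorial identity you allude to, are exactly where the technical work lies in \cite{LT}---but since the paper itself cites the result without argument, your level of detail already exceeds what the paper provides.
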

			
		In particular, the first polar multiplicity $m_0(X)$ coincides with the classical notion of multiplicity, which is the central object in the metric version of Zariski’s conjecture. This conjecture asks whether multiplicity is preserved under bi-Lipschitz homeomorphisms. 
		
		\subsection{Toric Varieties}
		
		In this subsection, we recall some fundamental notions on toric varieties that will be used throughout this work. In particular, the combinatorial description via semigroups and convex polyhedra plays an essential role in the analysis of ideals and their invariants, especially in the construction and study of the Newton polyhedra that will appear in the subsequent results. For more references, see, for instance, \cite{Cox}.\\
		
		Let $S \subset  \mathbb{Z}^{n}_{\geq0}$ be a finitely generated semigroup with respect to addition, satisfying $0 \in S$, with $\mathbb{Z}S=\mathbb{Z}^n$. Let $\{\gamma_1,\dots,\gamma_r\} \subset \mathbb{Z}^{n}_{\geq0}$ be a set of generators of $S$. This set induces a homomorphism of groups $\pi_{S} : \mathbb{Z}^r \to \mathbb{Z}^n$ given by
		$$\pi_{S}(\alpha_1,\dots,\alpha_r) = \alpha_1 \gamma_1 + \dots + \alpha_r \gamma_r.$$
		
		For each $\alpha = (\alpha_1, \dots, \alpha_r) \in Ker(\pi_{S})$ we set 
		$$
		{\alpha}_+ = \displaystyle \sum_{\alpha_i > 0} \alpha_i e_i \ \ \ \ \text{and} \ \ \ \ {\alpha}_{-} = -\displaystyle \sum_{\alpha_i < 0} \alpha_i e_i,
		$$
		where $e_1, \dots, e_r$ are the elements of the standard basis of $\mathbb{R}^r$. Let us observe that $\alpha = \alpha_{+} - \alpha_{-}$ and $\alpha_{+}, \alpha_{-} \in \mathbb{N}^r$, where $\mathbb{N}=\{0,1,2,\ldots\}$.

		Consider the ideal 
		\begin{equation*}\label{nucleo}
			I_S = \langle x^{\alpha_{+}} - x^{\alpha_{-}}; \ \ \alpha \in Ker(\pi_{S}) \rangle \subset \mathbb{C}[x_1,\dots,x_r],
		\end{equation*}
		where $x^{\beta} = x_1^{\beta_1} \dots x_r^{\beta_r}$ with $\beta = (\beta_1, \dots, \beta_r) \in \mathbb{N}^r$. It is possible to prove that $I_S$ is a prime ideal (see Propositions $1.1.9$ and $1.1.11$ in \cite{Cox}). The $n$-dimensional affine toric variety $X(S) \subset \mathbb{C}^r$ generated by $S$ is the irreducible  variety $\mathcal{V}(I_S)$ defined by the zero set of the ideal $I_S$ as in (\ref{nucleo}) , which is not necessarily normal. \\
		
		In the following, we introduce the concepts of cone and dual cone, which are studied in convex geometry. Those elements will be necessary for defining the concept of Newton polyhedron, which we will use in this work.
		\begin{definition}
			Let $\{\gamma_{1},\dots,\gamma_{r}\}\subset \Z^{n}_{\geq 0}$ be a finite generator set of $S$. The convex polyhedral cone associated with $S$ in $\R^{n}$ is the set 
			$$ cone(S)=\Bigg\{ \sum_{i=1}^{r} \lambda_{i}\gamma_{i} \ \vert \ \lambda_{i} \in \R \ , \ \lambda_{i}\geq 0 \Bigg\}.$$
			The vectors $\gamma_{1},\dots,\gamma_{r}$ are called generators of the  $cone(S)$. Also, set $cone(\emptyset)=\{0\}$.
		\end{definition}
		
		In what follows, we will consider that  $dim( cone(S))=n$ and that $cone(S)$ is strongly convex, i.e. $cone(S) \cap (- cone(S)) = \{0\}$. Let $(\R^{n})^{\ast}$ be the dual space of $\R^{n}$. To each cone, we associate the dual cone, defined by $$\coned{S}=\{ u \in (\R^{n})^{*} : \langle u,v \rangle \geq 0 , \forall v \in cone(S)\}.$$
		
		Let $g \in \OX{S}$, i.e., $g=h\vert_{X(S)}$, where $h: \C^{r} \to \C$ is an analytic function such that $h=\sum_{k} a_{k}x^{k}$ is its Taylor expansion. The support of $g \in \OX{S}$ is defined by the set
		$$supp (g) = \Bigg\{\nu(k): \ \ \sum_{\nu(k') = \nu(k)} a_{k'}\neq0 \ \  \text{ and }k=(k_1,\dots,k_r)\Bigg\}, $$
		where $\nu(k)=k_1\gamma_1+\cdots+k_r\gamma_r \in S$ and $\gamma_1,\dots, \gamma_r$ are generators of $S$. If $G \subseteq \OX{S}$ is a subset of the ring $\OX{S}$, we define the support of $G$ as \begin{equation}
			\label{Supp-Ideal}
			supp(G)=\bigcup_{g \in G}supp(g).
		\end{equation}
		
		\begin{definition}
			Let $G$ be a subset of the ring $\OX{S}$. The convex hull of $$\displaystyle\bigcup_{\alpha\in supp(G)}(\alpha+ \operatorname{cone}(S)) \subset \operatorname{cone}(S)$$ is called the \emph{Newton polyhedron} of $G$ and will be denoted by $\Gamma_+(G)$. 
		\end{definition}
		
		If $I \subseteq \OX{S}$ is the  ideal generated by $G$,  then, by the definition of convex hull and the properties of the support, it follows that $\Poli(I)=\Poli(G)$. Moreover, if $G = \{g_{1},\dots,g_{l}\}$, then $\Poli(I)$ is equal to the convex hull of $\Poli(g_{1}) \cup \cdots\cup \Poli(g_{l})$. Therefore, $\Poli(I)$ is independent of the chosen generating system.\\
		
		Now, given a compact subset $A \subseteq cone(S)$, we denote by
		$$g_{A}:= \sum_{\nu(k) \in A\cap supp(g)} a_{k} x^{k}.$$
		We set $g_{A} = 0$ whenever $A \cap supp(g) = \emptyset$.
		
		Let $g$ be a representative of the germ $g \in \mathcal{O}_{X(S)}$ such that $0 \notin supp(g)$. Then, we associate to $g$ a polynomial $L(g) \in \On$ given by
		$$L(g)= \sum_{v \in supp(g)} a_v z^v,$$
		where $a_v=\displaystyle\sum_{\nu(k') = \nu(k)} a_{k'}\neq0 $ and $v=\nu(k) \in S$.
		\begin{definition}
			Let $G=\{g_{1},\dots,g_{l}\} \subseteq \OX{S}$. We say that $G$ is non-degenerate if, for each compact face $\Delta$ of $\Poli(G)$, the equations $$L\big((g_1)_{\Delta}\big)= \cdots= L\big((g_l)_{\Delta}\big)=0$$
			have no common solution in $(\C^{\ast})^{s}$.
			
			An ideal $I\subseteq \OX{S}$ is non-degenerate if $I$ admits a non-degenerate system of generators.
			
		\end{definition}
		
		\begin{remark}
			Let $\{e_{1},\ldots,e_{s}\}$ be a canonical basis of $\mathbb{C}^s$. If $S=\langle e_{1},\ldots,e_{s} \rangle$, then the toric variety $X(S)=\C^{s}$. In this case, we have $\operatorname{cone}(S)=\R^{s}_{\geq 0}$. Thus, the previous definition can be seen as a generalization of the Newton non-degeneracy condition for germs in $\mathcal{O}_{s}$ presented by Saia in \cite{Saia}.
		\end{remark}

		Given an ideal $I$ of $\mathcal{O}_{X(S)}$, we denote by $I^{\circ}$ the ideal of $\OX{S}$ generated by the monomials $x^{k}$ such that $k_{1}\gamma_{1}+\cdots+k_{r}\gamma_{r} \in \Poli(I)$, where $k=(k_1,\ldots, k_r)$. The following result provides a characterization of non-degeneracy condition for ideals in $\mathcal{O}_{X(S)}$.
		
		\begin{theorem}[see \cite{ATT}]\label{ICideals}
			Let $I$ be an ideal in $\mathcal{O}_{X(S)}$. Then $I$ is non-degenerate if and only if its integral closure $\overline{I}$ is equal to the ideal $I^{\circ}$. 
		\end{theorem}
		
		\section{The Bi-Lipschitz Invariance of the \L ojasiewicz Exponent}
		
		The study of bi-Lipschitz invariants plays a central role in singularity theory, as these invariants capture geometric properties of analytic varieties beyond their purely algebraic structure. This section generalizes the work of Bivia-Ausina and Fukui \cite{carlesfukui}, extending their results on the bi-Lipschitz invariance of the order and \L ojasiewicz exponent from ideals in $\mathcal{O}_n$ to ideals in $\mathcal{O}_{X(S)}$, where $(X(S),0)$ is a germ of an affine toric variety associated with the semigroup $S$. As a first step, we establish a general result on the bi-Lipschitz equivalence of ideals, which will serve as a fundamental tool for the subsequent analysis of Jacobian ideals and their integral closures.\\

		The proofs of Theorems \ref{teo-ord}, \ref{Jfbl}, and Corollary \ref{BLclosure} are inspired by the arguments presented in the work of Bivià-Ausina and Fukui \cite{carlesfukui}.

		\begin{theorem}\label{teo-ord}
			Let $I$ and $J$ be ideals of $\mathcal{O}_{X(S)}$.  
			If $I$ and $J$ are bi-Lipschitz equivalent, then $\operatorname{ord}(I) = \operatorname{ord}(J)$.  
			Moreover, if $I$ and $J$ have finite colength, then $\mathcal{L}_0(I) = \mathcal{L}_0(J)$.
		\end{theorem}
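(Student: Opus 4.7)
The plan is to reduce both invariants to analytic growth data attached to generating tuples and then transport that data through the bi-Lipschitz change of coordinates. By the definition of bi-Lipschitz equivalence, there exist generators $f=(f_1,\ldots,f_p)$ and $g=(g_1,\ldots,g_q)$ of sub-ideals of $I$ and $J$ with $\overline{\langle f_1,\ldots,f_p\rangle}=\overline{I}$ and $\overline{\langle g_1,\ldots,g_q\rangle}=\overline{J}$, together with a bi-Lipschitz homeomorphism $\varphi:(X,0)\to(X,0)$ satisfying $\|f(x)\|\sim \|g(\varphi(x))\|$ near $0$. Since $\varphi$ and $\varphi^{-1}$ are Lipschitz and fix the origin, one also has $\|\varphi(x)\|\sim \|x\|$ near $0$, and this is the key ingredient behind every transport argument below.

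For the Łojasiewicz statement, I would use that $\mathfrak{m}\subseteq \mathcal{O}_X$ is generated by the restrictions of the coordinate functions of $\mathbb{C}^n$, so that a generating tuple of $\mathfrak{m}$ at $x\in X$ has norm $\|x\|$. Since $\mathcal{L}_0$ depends only on the integral closure of the ideal, this gives the analytic formula
\[
\mathcal{L}_0(I)=\inf\{\alpha\geq 0 : \|x\|^{\alpha}\lesssim \|f(x)\|\text{ near }0\},
\]
and an analogous expression for $\mathcal{L}_0(J)$ in terms of $g$. The substitution $y=\varphi(x)$, combined with $\|\varphi(x)\|\sim \|x\|$ and $\|f(x)\|\sim \|g(\varphi(x))\|$, converts $\|x\|^{\alpha}\lesssim \|f(x)\|$ into $\|y\|^{\alpha}\lesssim \|g(y)\|$ near $0$, yielding $\mathcal{L}_0(I)=\mathcal{L}_0(J)$.

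For the order the strategy is parallel: once one establishes the analytic characterization
\[
\operatorname{ord}(I)=\max\{r\in \mathbb{Z}_{\geq 0} : \|f(x)\|\lesssim \|x\|^{r}\text{ near }0\},
\]
valid for every generating tuple $f$ of an ideal with the same integral closure as $I$, the identical change of variable converts $\|f(x)\|\lesssim \|x\|^r$ into $\|g(y)\|\lesssim \|y\|^r$ and $\operatorname{ord}(I)=\operatorname{ord}(J)$ follows. One inclusion of this characterization is immediate: if $I\subseteq \mathfrak{m}^r$, each $f_i$ lifts to an element of the $r$-th power of the maximal ideal of $\mathcal{O}_n$, so that $|f_i(x)|\lesssim \|x\|^r$.

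The main obstacle is the reverse inclusion. By the analytic criterion for integral closure (in the spirit of Lejeune--Teissier), the condition $\|f(x)\|\lesssim \|x\|^r$ only places each $f_i$ in $\overline{\mathfrak{m}^{r}}$ and hence $\overline{I}\subseteq \overline{\mathfrak{m}^{r}}$. In the smooth setting of Bivià-Ausina and Fukui, $\mathfrak{m}^r$ is automatically integrally closed and there is nothing more to do; in the present normal setting one must leverage the normality of $\mathcal{O}_X$ to relate $\overline{\mathfrak{m}^{r}}$ back to $\mathfrak{m}^{r}$, for instance via an arc-theoretic analysis of initial forms in $\mathrm{gr}_{\mathfrak{m}}(\mathcal{O}_X)$ along the tangent cone, which forces the algebraic order of each $h\in \mathcal{O}_X$ to coincide with its analytic growth rate on $X$. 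This step, replacing the automatic integrality available in the smooth case, is where the normality hypothesis does essential work and is the most delicate part of the argument.
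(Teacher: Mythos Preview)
Your proposal is correct and follows essentially the same route as the paper: both reduce $\mathcal{L}_0$ and $\operatorname{ord}$ to analytic growth estimates on generating tuples, then transport these through $\varphi$ via $\|\varphi(x)\|\sim\|x\|$ and $\|f(x)\|\sim\|g(\varphi(x))\|$. The only difference is at the normality step: the paper dispatches it in a single line, asserting that normality of $\mathcal{O}_X$ gives $\max\{s:J\subseteq\mathfrak{m}^s\}=\max\{s:J\subseteq\overline{\mathfrak{m}^s}\}$ directly, whereas you flag this as the delicate point and sketch a more elaborate arc-theoretic justification; apart from that emphasis, the two arguments coincide.
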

		
		\begin{proof}
			Since $I$ and $J$ are bi-Lipschitz equivalent, there exist germs of analytic maps  
			$$f = (f_1, \dots, f_p) : (X(S), 0) \to (\mathbb{C}^p, 0) $$
			and  
			$$g = (g_1, \dots, g_q) : (X(S), 0) \to (\mathbb{C}^q, 0)$$
			such that  
			$$\overline{I} = \overline{\langle f_1, \dots, f_p \rangle}\quad \text{and} \quad \overline{J} = \overline{\langle g_1, \dots, g_q \rangle},$$
			and there exists a bi-Lipschitz homeomorphism  $ \varphi : (X(S), 0) \to (X(S), 0)$ satisfying  
			$$\|g(\varphi(x))\| \sim \|f(x)\| \quad \text{near } 0.$$
			By symmetry, it suffices to prove that $\mathcal{L}_0(I) \leq \mathcal{L}_0(J)$ and $\operatorname{ord}(I) \leq \operatorname{ord}(J)$. Let $\mathcal{L}_0(J) = \theta \in \mathbb{R}_{\geq 0}$ be such that   $$\|x\|^\theta \lesssim \|g(x)\| \quad \text{ near }0.$$
			Then
			$$\|x\|^\theta \sim \|\varphi(x)\|^\theta \lesssim \|g(\varphi(x))\| \sim \|f(x)\| \quad \text{ near }0,$$
			and it follows that $\mathcal{L}_0(I) \leq \mathcal{L}_0(J)$. Moreover, we have
			$$ \operatorname{ord}(J) = \max \{s : J \subseteq \mathfrak{m}^s\}  =\max \{s : J \subseteq \overline{\mathfrak{m}^s}\} = \max \{s : \|g(x)\| \lesssim \|x\|^s \text{ near } 0\},$$
			where $\mathfrak{m}$ is the maximal ideal of $\mathcal{O}_{X(S)}$.
			
			If $\|f(x)\| \lesssim \|x\|^{ord(I)} \text{ near } 0,$ then
			$$\|g(x)\| \sim \|f(\varphi(x))\| \lesssim\|\varphi(x)\|^{ord(I)} \sim \|x\|^{ord(I)} \text{ near } 0,$$
			and we obtain  $\operatorname{ord}(I) \leq \max \{s : \|g(x)\| \lesssim \|x\|^s \text{ near } 0\} =ord(J).$
		\end{proof}
		
		\begin{corollary}\label{BLclosure}
			Let $I$ and $J$ be ideals of $\mathcal{O}_{X(S)}$ with finite colength. 
			Let us suppose that $\overline{I} = \mathfrak{m}^{\mathrm{ord}(I)}$. Then, $I$ and $J$ are bi-Lipschitz equivalent if and only if $\overline{I} = \overline{J}$.
		\end{corollary}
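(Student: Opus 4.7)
The plan is to establish both implications directly. The forward direction is essentially automatic, while the reverse direction relies on combining Theorem~\ref{teo-ord} with the hypothesis that $\overline{I}$ is a power of the maximal ideal.

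For the ``if'' direction, assume $\overline{I}=\overline{J}$. I would take any generating sets $f_1,\dots,f_p$ of $I$ and $g_1,\dots,g_q$ of $J$. The analytic characterization of integral closure in $\mathcal{O}_X$ then gives that each $g_j\in\overline{J}=\overline{I}$ satisfies $|g_j(x)|\lesssim\|(f_1(x),\dots,f_p(x))\|$, and symmetrically for the $f_i$. Hence $\|f(x)\|\sim\|g(x)\|$ near $0$, and taking $\varphi=\mathrm{id}_X$ exhibits the bi-Lipschitz equivalence.

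For the ``only if'' direction, set $d:=\operatorname{ord}(I)$. By Theorem~\ref{teo-ord}, the bi-Lipschitz equivalence of $I$ and $J$ forces $\operatorname{ord}(J)=d$ and $\mathcal{L}_0(J)=\mathcal{L}_0(I)$. The hypothesis $\overline{I}=\mathfrak{m}^d$ allows a direct computation: since $\mathfrak{m}^d$ is generated by monomials whose norm is comparable to $\|x\|^d$, one finds $\mathcal{L}_0(I)=d$, hence $\mathcal{L}_0(J)=d$ as well. Moreover $\mathfrak{m}^d=\overline{I}$ is integrally closed (being an integral closure), and $\operatorname{ord}(J)=d$ yields $J\subseteq\mathfrak{m}^d$, so $\overline{J}\subseteq\overline{\mathfrak{m}^d}=\mathfrak{m}^d=\overline{I}$. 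For the reverse inclusion I invoke the algebraic formula $\mathcal{L}_0(J)=\min\{a/b:\mathfrak{m}^a\subseteq\overline{J^b}\}$ recalled from \cite{L-T}: the equality $\mathcal{L}_0(J)=d$ produces integers $a,b\geq 1$ with $a/b=d$ and $\mathfrak{m}^{bd}\subseteq\overline{J^b}$. For any $h\in\mathfrak{m}^d$ one then has $h^b\in\mathfrak{m}^{bd}\subseteq\overline{J^b}$, which via the analytic criterion translates to $|h(x)|^b\lesssim\|g(x)\|^b$, hence $|h(x)|\lesssim\|g(x)\|$ and $h\in\overline{J}$. Combining the two inclusions gives $\overline{J}=\mathfrak{m}^d=\overline{I}$.

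The only delicate point I anticipate is the passage from $\mathfrak{m}^{bd}\subseteq\overline{J^b}$ to $\mathfrak{m}^d\subseteq\overline{J}$; this should be handled by extracting $b$-th roots in the generator-wise analytic inequality, and depends only on the analytic criterion for integral closure in the normal local ring $\mathcal{O}_X$ that is already invoked throughout the paper.
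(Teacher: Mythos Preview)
Your argument is correct and differs from the paper's in the harder direction. Both proofs begin identically: invoke Theorem~\ref{teo-ord} to get $\operatorname{ord}(J)=\operatorname{ord}(I)=:d$ and $\mathcal{L}_0(J)=\mathcal{L}_0(I)=d$, and observe $\overline{J}\subseteq\mathfrak m^d$ from $\operatorname{ord}(J)=d$. For the reverse inclusion $\mathfrak m^d\subseteq\overline{J}$, the paper passes through Hilbert--Samuel multiplicities: it uses the chain of inequalities~(\ref{eq1}) to force $e(J)=e(\mathfrak m^{d})$ and then appeals to Rees' Multiplicity Theorem. You instead stay purely with the \L ojasiewicz exponent: from the Lejeune--Teissier formula $\mathcal{L}_0(J)=\min\{a/b:\mathfrak m^a\subseteq\overline{J^b}\}=d$ you extract a witness $\mathfrak m^{bd}\subseteq\overline{J^b}$, and then for $h\in\mathfrak m^d$ pass from $h^b\in\overline{J^b}$ to $h\in\overline{J}$ by taking $b$-th roots in the analytic inequality. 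This last step, which you flag as delicate, is in fact routine: it is the standard fact that $\overline{J^b}\cap\{h^b:h\in\mathcal{O}_X\}$ detects $\overline{J}$, immediate from either the analytic criterion (as you wrote) or the valuative criterion. Your route is more elementary---it avoids multiplicities and Rees entirely---while the paper's route ties the statement back to the multiplicity framework used elsewhere in the article. One cosmetic point: your justification of $\mathcal{L}_0(\mathfrak m^d)=d$ via ``monomials'' is phrased for $\mathcal{O}_n$; in $\mathcal{O}_X$ the same conclusion holds because $\mathfrak m$ is generated by the restrictions of the ambient coordinates, so the generators of $\mathfrak m^d$ still satisfy $\|g(x)\|\sim\|x\|^d$ on $X$.
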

		\begin{proof}
			The “if” part is immediate. If $I$ and $J$ are bi-Lipschitz equivalent, then by Theorem~\ref{teo-ord} we have $\operatorname{ord}(I) = \operatorname{ord}(J)$ and $\mathcal{L}_0(I) = \mathcal{L}_0(J)$. Since $\overline{I} = \mathfrak{m}^{\operatorname{ord}(I)}$, it follows that $\mathcal{L}_0(I) \le \operatorname{ord}(I)$. Moreover, $I \subseteq \mathfrak{m}^{\operatorname{ord}(I)}$ implies $I^q \subseteq \mathfrak{m}^{q \cdot \operatorname{ord}(I)}$ and hence $\overline{I^q} \subseteq \mathfrak{m}^{q \cdot \operatorname{ord}(I)}$ for all $q \in \mathbb{N}$. If $\mathfrak{m}^p \subseteq \overline{I}^q$, then $p \ge q \cdot \operatorname{ord}(I)$, so $$\operatorname{ord}(I) \le \inf\{ p/q : \mathfrak{m}^p \subseteq \overline{I}^q \} = \mathcal{L}_0(I).$$ Therefore, $\operatorname{ord}(I) = \mathcal{L}_0(I)$.
			
			Since $J \subseteq \mathfrak{m}^{\operatorname{ord}(J)}$, by (\ref{eq1}) we obtain
			$$ e(\mathfrak{m})\,\operatorname{ord}(I)^n 
			\ \geq\ 
			\operatorname{ord}(I)^n 
			= \mathcal{L}_0(I)^n 
			= \mathcal{L}_0(J)^n 
			\ \geq\ e(J) 
			\ \geq\ e(\mathfrak{m})\,\operatorname{ord}(J)^n 
			= e(\mathfrak{m})\,\operatorname{ord}(I)^n.
			$$
			Hence $e(J)=e(\mathfrak{m}^{\operatorname{ord}(I)})$. By the Rees Multiplicity Theorem (see \cite{hunekeswanson}), $ \mathfrak{m}^{\operatorname{ord}(I)} \subseteq \overline{J}$. Therefore, $\overline{J}=\overline{I}$.
		\end{proof}
		The following example illustrates a situation where the hypothesis $\overline{I} = \mathfrak{m}^{\mathrm{ord}(I)}$ of Corollary \ref{BLclosure} is satisfied.
		
		\begin{example}
			Let $I \subset \mathcal{O}_{X(S)}$ be a non-degenerate ideal, where $X(S)$ is a normal toric variety with $S = \langle b_1, \dots, b_r \rangle$. If $\Gamma_+(I) = \Gamma_+(\mathfrak{m}^{\mathrm{ord}(I)})$, then $\overline{I} = \mathfrak{m}^{\mathrm{ord}(I)}$. Indeed, by Theorem \ref{ICideals}, the integral closure is given by
			$$\overline{I} = \langle x^k : k_1b_1 + \cdots + k_rb_r \in \Gamma_+(I) \rangle.$$
			The hypothesis $\Gamma_+(I) = \Gamma_+(\mathfrak{m}^{\mathrm{ord}(I)})$ implies that
			$$\overline{I} = \langle x^k : k_1b_1 + \cdots + k_rb_r \in \Gamma_+(\mathfrak{m}^{\mathrm{ord}(I)}) \rangle = \overline{\mathfrak{m}^{\mathrm{ord}(I)}}.$$
			Since $X(S)$ is normal, the ideal $\mathfrak{m}^{\mathrm{ord}(I)}$ is integrally closed, and we conclude that $\overline{I} = \mathfrak{m}^{\mathrm{ord}(I)}$.
		\end{example}
		
		Before proceeding, we recall that for a germ $f\in \mathcal{O}_{X(S)}$ the Jacobian ideal is defined by $$J(f)=\left\langle \frac{\partial f}{\partial x_1},\dots,\frac{\partial f}{\partial x_n}\right\rangle \subseteq \mathcal{O}_{X(S)}.$$
		In the case of an isolated singularity at the origin, this ideal has finite colength.

		\begin{theorem}\label{Jfbl}
			Let $f, g \in \mathcal{O}_{X(S)}$ be germs with an isolated singularity at the origin. Suppose that $f$ and $g$ are bi-Lipschitz $\mathcal{A}$-equivalent or bi-Lipschitz $\mathcal{K}^*$-equivalent.
			Then the Jacobian ideals $J(f)$ and $J(g)$ are bi-Lipschitz equivalent. In particular, $\operatorname{ord}(f) = \operatorname{ord}(g)$ and $\mathcal{L}_0(J(f)) = \mathcal{L}_0(J(g))$.
		\end{theorem}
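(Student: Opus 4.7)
The plan is to reduce both hypotheses to a single pointwise gradient comparison and invoke Theorem~\ref{teo-ord}. Concretely, I aim to show that if $\varphi : (X,0) \to (X,0)$ is the bi-Lipschitz homeomorphism appearing in the equivalence, then
$$\bigl\|\bigl(\tfrac{\partial f}{\partial x_1}(x),\ldots,\tfrac{\partial f}{\partial x_n}(x)\bigr)\bigr\| \;\sim\; \bigl\|\bigl(\tfrac{\partial g}{\partial x_1}(\varphi(x)),\ldots,\tfrac{\partial g}{\partial x_n}(\varphi(x))\bigr)\bigr\| \quad \text{near } 0.$$
Since the partial derivatives themselves generate $J(f)$ and $J(g)$, conditions~(1) and~(2) of the definition of bi-Lipschitz equivalence are trivial and the display above is precisely condition~(3); so $J(f)$ and $J(g)$ become bi-Lipschitz equivalent and Theorem~\ref{teo-ord} yields $\mathcal{L}_0(J(f)) = \mathcal{L}_0(J(g))$. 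The identity $\operatorname{ord}(f) = \operatorname{ord}(g)$ is a separate, easier observation: in the $\mathcal{A}$-case $|\phi(t)| \sim |t|$ and in the $\mathcal{K}^{*}$-case $|A(x)| \sim 1$, so both hypotheses force $|f(x)| \sim |g(\varphi(x))|$ near $0$; thus $\langle f \rangle$ and $\langle g \rangle$ are bi-Lipschitz equivalent and Theorem~\ref{teo-ord} applies once more.

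For the $\mathcal{A}$-case I would extend $\varphi$ and $\phi$ to Lipschitz maps on ambient Euclidean spaces and apply Rademacher's theorem, so that both are differentiable at almost every point with invertible differentials whose operator norms---and those of their inverses---are controlled by the Lipschitz constants. At a.e.\ smooth point of $X$, the chain rule applied to $\phi \circ f = g \circ \varphi$ gives
$$d\phi_{f(x)} \circ df_x = dg_{\varphi(x)} \circ d\varphi_x,$$
and cancelling the outer bi-Lipschitz factors by their bounded inverses yields $\|df_x\| \sim \|dg_{\varphi(x)}\|$ almost everywhere. This translates to the required comparison of the vectors of partial derivatives; being continuous (in fact analytic) functions of $x$, the estimate propagates to all points of a full neighborhood of $0$.

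The $\mathcal{K}^{*}$-case is the main obstacle. Here $A(x) f(x) = g(\varphi(x))$ with $A, A^{-1}$ Lipschitz scalars (since $p=1$, $GL(\mathbb{C}) = \mathbb{C}^{*}$), hence $|A(x)| \sim 1$ near $0$. Differentiating almost everywhere produces
$$dA_x \cdot f(x) + A(x)\, df_x = dg_{\varphi(x)} \circ d\varphi_x,$$
whose awkward remainder is $dA_x \cdot f(x)$. I plan to absorb it via the \L ojasiewicz gradient inequality for the isolated critical point of $f$: there exist $\theta \in (0,1)$ and $c > 0$ with $\|\nabla f(x)\| \geq c\,|f(x)|^\theta$ near $0$, whence $|f(x)| \leq C\,\|\nabla f(x)\|^{1/\theta}$; since $1/\theta > 1$ and $\nabla f(0) = 0$, this yields $|f(x)| = o(\|\nabla f(x)\|)$ as $x \to 0$. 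Consequently, in a sufficiently small neighborhood,
$$\|df_x\| \leq C_1\|dg_{\varphi(x)}\| + \tfrac{1}{2}\|df_x\|,$$
which gives $\|df_x\| \lesssim \|dg_{\varphi(x)}\|$; the reverse inequality follows by symmetry applied to $f(x) = A(x)^{-1} g(\varphi(x))$ together with $\varphi^{-1}$.

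Once the gradient comparison is established, the bi-Lipschitz equivalence of the Jacobian ideals is immediate, and the announced conclusions follow from Theorem~\ref{teo-ord} together with the principal-ideal observation of the first paragraph. The truly delicate step is the \L ojasiewicz absorption in the $\mathcal{K}^{*}$-case; the subsidiary point of a.e.\ differentiability of Lipschitz maps on the normal analytic space $X$ is handled by Lipschitz extension to ambient $\mathbb{C}^n$, using that the singular locus of $X$ has measure zero.
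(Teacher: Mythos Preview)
Your argument is correct and follows essentially the same route as the paper: Rademacher's theorem for almost-everywhere differentiability on $X_{\mathrm{reg}}$, the chain rule, bounds coming from the bi-Lipschitz constants, and extension of the resulting gradient comparison by continuity to a full neighborhood of the origin. The only differences are minor: the paper absorbs the $\mathcal{K}^{*}$-remainder $dA_x\cdot f(x)$ via the elementary inequality $|f(x)|\lesssim \|x\|\,\|\nabla f(x)\|$ rather than the \L ojasiewicz gradient inequality (both yield $|f|=o(\|\nabla f\|)$ near $0$), and your separate derivation of $\operatorname{ord}(f)=\operatorname{ord}(g)$ from the bi-Lipschitz equivalence of the principal ideals $\langle f\rangle$ and $\langle g\rangle$ is more explicit than the paper, which leaves that conclusion implicit.
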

		\begin{proof}	
				Suppose that $f$ and $g$ are bi-Lipschitz $\mathcal A$-equivalent, that is, there exist bi-Lipschitz homeomorphisms
				$\varphi:(X(S),0)\to (X(S),0)$ and $\phi:(\C,0)\to (\C,0)$
				such that $\phi\circ f = g\circ\varphi$.
				
				We identify $(X(S),0) \subset \C^n$ with an analytic subvariety of $\R^{2n}$; in this setting, the regular part $X(S)_{reg}$ is a smooth real manifold of dimension $2\dim_{\C}X(S)$.
				
				By Rademacher’s theorem, $\phi$ and $\varphi$ are differentiable almost everywhere on $X(S)_{reg}$ and on $\C$. Thus, the derivatives $D\phi(x)$ and $D\varphi(x)$ exist almost everywhere on $X(S)_{reg}$. Moreover, since $\varphi$ and $\varphi^{-1}$ are bi-Lipschitz, their derivatives (as well as those of $\phi$ and $\phi^{-1}$) exist almost everywhere and are essentially bounded, with bounded inverses as well.
				
				For every $x \in X(S)_{reg}$ where the derivatives exist, the chain rule yields
				\begin{equation}\label{chainrule1}
					\nabla(g\circ \varphi)(x) \;=\; D\varphi(x)^T \, \nabla g(\varphi(x)),
				\end{equation}
				where $\nabla g$ denotes the real gradient of $g$ restricted to $X(S)_{reg}$. On the other hand,
				\begin{equation}\label{chainrule2}
					\nabla(\phi\circ f)(x) \;=\; D\phi(f(x))\,\nabla f(x).
				\end{equation}
				
				Comparing \eqref{chainrule1} and \eqref{chainrule2} and taking norms, we obtain
				$$\|D\varphi(x)^T \nabla g(\varphi(x))\|
				= \|D\phi(f(x)) \nabla f(x)\|
				\leq \|D\phi(f(x))\| \, \|\nabla f(x)\|
				\leq M \|\nabla f(x)\|.$$
				
				Since $D\phi(f(x))$ is invertible with $\|D\phi(f(x))^{-1}\|\leq L$, it follows that
				$$\|\nabla g(\varphi(x))\| \leq \|D\phi(f(x))^{-T}\| \, \|D\phi(f(x)) \nabla f(x)\|\leq LM \|\nabla f(x)\|.$$
				
				Similarly, using $\varphi^{-1}$ and $\phi^{-1}$, we obtain
				$$\|\nabla f(x)\| \;\leq\; LM \|\nabla g(\varphi(x))\|.$$
				
				Therefore, there exist constants $C_1, C_2 > 0$ such that, for almost every $x \in X(S)_{reg}$,
				$$C_1 \|\nabla f(x)\| \;\leq\; \|\nabla g(\varphi(x))\| \;\leq\; C_2 \|\nabla f(x)\|.$$
				
				We now extend this inequality to all points near the origin. Note that the singular locus of $X(S)$, $X(S)_{\mathrm{sing}}$, is closed, of real codimension at least $2$, hence of measure zero, and that $\nabla f$ and $\nabla g$ are continuous on $X(S)$. Thus, an inequality holding almost everywhere on $X(S)_{reg}$ extends, by density and continuity, to all points of $X(S)$.
				
				More precisely,  given $x_0 \in X(S)_{reg}$, there exists a sequence $x_n \in X(S)_{reg}$ with $x_n \to x_0$ such that the inequality holds for all $x_n$. Taking the limit as $n \to \infty$ and using the continuity of $\nabla f$ and $\nabla g$, the inequality also holds at $x_0$. At the origin $0 \in X(S)$, since $f$ and $g$ have isolated singularities, $\nabla f(0) = \nabla g(0) = 0$, which is compatible with the inequality.
				
				We conclude that the inequality holds in a neighborhood $V \subset X(S)$ of the origin. Hence, $J(f)$ and $J(g)$ are bi-Lipschitz equivalent.
				
				Now suppose that $f$ and $g$ are bi-Lipschitz $\mathcal K^*$-equivalent. 
				That is, there exist a bi-Lipschitz homeomorphism $\varphi:(X(S),0)\to (X(S),0)$ and a Lipschitz function $A:(X(S),0)\to\C^*$, with Lipschitz inverse $A^{-1}$, such that $g\circ \varphi = A\cdot f$ in a neighborhood of the origin in $X(S)$. Then we have that
				$$\begin{aligned}
					\|\big(\nabla g\big)(\varphi(x))\| 
					&\lesssim \|\big(\nabla g\big)(\varphi(x)) D\varphi(x)\| 
					&& \text{(since $\varphi$ is bi-Lipschitz)} \\[6pt]
					&= \|\nabla (g \circ \varphi)(x)\| 
					&& \text{(by \ref{chainrule1} )} \\[6pt]
					&= \|\nabla A(x) f(x) + A(x)\nabla f(x)\| 
					&& \text{(since $g(\varphi(x)) = A(x) f(x)$)} \\[6pt]
					&\leq \|\nabla A(x)\|\,|f(x)| + \|A(x)\| \,\|\nabla f(x)\| \\[6pt]
					&\lesssim |f(x)| + \|\nabla f(x)\| 
					&& \text{(since $A(x)$ is Lipschitz)} \\[6pt]
					&\lesssim \|x\|\,\|\nabla f(x)\| + \|\nabla f(x)\| 
					&& \text{(since $|f(x)| \lesssim \|x\|\|\nabla f(x)\|$)} \\[6pt]
					&\lesssim \|\nabla f(x)\|,
				\end{aligned}$$
				almost everywhere. Similarly, we have 
				$$\|(\nabla f)(\varphi^{-1}(x))\| \lesssim \|\nabla g(x)\|$$
				near $0$, and hence we obtain that the ideals $J(f)$ and $J(g)$ are bi-Lipschitz equivalent.
			\end{proof}
			
			\begin{corollary}\label{closurejacobian}
				Let $f \in \mathcal{O}_{X(S)}$ be such that $\overline{J(f)} = \mathfrak{m}^{\operatorname{ord}(f)-1}$. 
				If $g \in \mathcal{O}_{X(S)}$ is bi-Lipschitz $\mathcal{A}$-equivalent 
				to $f$ or bi-Lipschitz $\mathcal{K}^*$-equivalent to $f$, 
				then $\overline{J(f)}=\overline{J(g)}$.
			\end{corollary}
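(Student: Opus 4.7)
The plan is to combine Theorem \ref{Jfbl} with Corollary \ref{BLclosure}. Theorem \ref{Jfbl} immediately gives that the Jacobian ideals $J(f)$ and $J(g)$ are bi-Lipschitz equivalent in the sense of ideals. The remaining work is to check that the pair $J(f), J(g)$ satisfies the hypotheses of Corollary \ref{BLclosure}, after which the conclusion $\overline{J(f)} = \overline{J(g)}$ follows at once.

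Two things must be verified: (i) both $J(f)$ and $J(g)$ have finite colength, and (ii) $\overline{J(f)} = \mathfrak{m}^{\operatorname{ord}(J(f))}$. For (i), the assumption $\overline{J(f)} = \mathfrak{m}^{\operatorname{ord}(f)-1}$ shows $J(f)$ is $\mathfrak{m}$-primary, hence has finite colength; the finite colength of $J(g)$ follows because $g$ has an isolated singularity at the origin, already implicit in the application of Theorem \ref{Jfbl}, and so the zero set of $J(g)$ reduces to $\{0\}$. For (ii), I would exploit normality of $\mathcal{O}_X$: since $\mathfrak{m}^s$ is integrally closed for every $s$, the inclusion $J(f) \subseteq \mathfrak{m}^s$ is equivalent to $\overline{J(f)} \subseteq \mathfrak{m}^s$. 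Consequently,
$$\operatorname{ord}(J(f)) = \max\{s : \mathfrak{m}^{\operatorname{ord}(f)-1} \subseteq \mathfrak{m}^s\} = \operatorname{ord}(f) - 1,$$
and the hypothesis $\overline{J(f)} = \mathfrak{m}^{\operatorname{ord}(f)-1}$ rewrites as $\overline{J(f)} = \mathfrak{m}^{\operatorname{ord}(J(f))}$, which is precisely condition (ii).

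With both hypotheses in hand, Corollary \ref{BLclosure} applied to the bi-Lipschitz equivalent ideals $J(f)$ and $J(g)$ yields $\overline{J(f)} = \overline{J(g)}$, as desired. Since this argument is essentially a direct composition of two previously established results, I do not anticipate any serious obstacle. The only subtlety is the bookkeeping that converts the assumption phrased in terms of $\operatorname{ord}(f)$ into the assumption phrased in terms of $\operatorname{ord}(J(f))$ required by Corollary \ref{BLclosure}, and this is immediate from normality.
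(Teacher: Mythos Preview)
Your proposal is correct and follows essentially the same approach as the paper, which records the corollary as an immediate consequence of Theorem~\ref{Jfbl} (yielding bi-Lipschitz equivalence of $J(f)$ and $J(g)$) together with Corollary~\ref{BLclosure}. Your additional verification that $\operatorname{ord}(J(f)) = \operatorname{ord}(f)-1$ via normality, and that both Jacobian ideals have finite colength, simply makes explicit the bookkeeping the paper leaves to the reader.
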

			\begin{proof}
				It follows from Theorem \ref{teo-ord} and Corollary \ref{BLclosure}.
			\end{proof}
			Before presenting the example below, we briefly recall the notion of a semi–homogeneous function. A germ $f \in \mathcal{O}_{X(S)}$ is called \emph{semi–homogeneous of order $d$} if it admits a decomposition of the form
			$$f = f_d + \text{(higher order terms)},$$
			where $f_d$ is a nonzero homogeneous polynomial of degree $d$. Equivalently, $f$ is semi–quasi–homogeneous  with respect to the weight $(1,\dots,1)$.
			
			\begin{example}
				If $f \in \mathcal O_{n}$ is semi--homogeneous of order $d$ and its homogeneous component $f_d$ has an isolated singularity, then
				$$\overline{J(f)} = \mathfrak{m}^{\,d-1},$$
				where $\mathfrak{m}$ is the maximal ideal of $\mathcal{O}_{n}$. This follows from growth estimates comparing $\nabla f =( \frac{\partial f}{\partial x_1},\dots, \frac{\partial f}{\partial x_n})$ with monomials of degree $d-1$, which ensure that all such monomials belong to the integral closure of the Jacobian ideal.
				
				We can write $$f=f_d+\text{(terms of order $>d$)},$$ where $f_d$ is homogeneous of degree $d$ and has an isolated singularity at $0$.
				
				Since each partial derivative $\partial f/\partial x_i$ has order $\ge d-1$, we obtain $J(f)\subseteq\mathfrak m^{d-1}$ and hence $\overline{J(f)}\subseteq \mathfrak m^{d-1}$. For the reverse inclusion, let $q(x)$ be any monomial of degree $d-1$. Writing $x=ru$ with $r=\|x\|$ and $u\in S^{2n-1}$, we have $$\nabla f_d(x)=r^{d-1}\nabla f_d(u) \quad \text{ and } \quad q(x)=r^{d-1}q(u).$$ 
				
				Since $f_d$ has an isolated singularity, $\nabla f_d(u)\neq0$ for all $u\in S^{2n-1}$; by Weierstrass' Theorem there exist constants $m_0>0$ such that $$\|\nabla f_d(x)\|\ge m_0\|x\|^{d-1}.$$
				
				Taking $M_0:= \max_{u \in S^{2n-1}}  |q(u)| < \infty$, we have $|q(x)|\le M_0\|x\|^{d-1}.$
				
				Moreover, $\nabla f(x)=\nabla f_d(x)+R(x)$, where $R(x)$ are the derivatives of the terms of order $>d$. Note that $\operatorname{ord}R\ge d$, hence $\|R(x)\|\le \tfrac{m_0}{2}\|x\|^{d-1}$ in a neighborhood of $0$, and consequently $$\|\nabla f(x)\|\ge \tfrac{m_0}{2}\|x\|^{d-1}$$
				in a neighborhood of $0$. Hence, $$| q(x)|  \leq  \frac{2M_0}{m_0} \|\nabla f(x)\|$$ 
				in a neighborhood of $0$.
				Therefore, in the same neighborhood, we get 
				$$|q(x)| \leq C \max_{1\leq i\leq n}\Big\{\frac{\partial f}{\partial x_i}(x)\Big\}$$
				for some constant $C>0$, which by the Growth Condition implies $q\in\overline{J(f)}$. Since this holds for every monomial of degree $d-1$, we conclude $\mathfrak m^{d-1}\subseteq \overline{J(f)}$, and the equality follows.
			\end{example}
			
			\begin{example}
				Consider the toric variety $X(S)$ associated with the semigroup $S=\{(1,0),(1,1),(1,2)\}$. Let $g = x^a + y^b + z^a + R(x,y,z) \in \mathcal{O}_{X(S)}$, where $b \geq a$ and $R(x,y,z)$ consists of terms of order strictly greater than $a$. The Newton polyhedron of the Jacobian ideal $J(g)$ is depicted in Figure \ref{J}.
				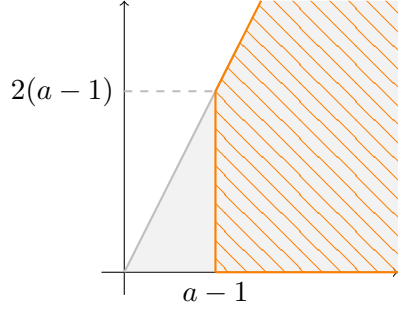
\begin{figure}[H]
					\centering
					\begin{tikzpicture}[scale=0.6]
						\draw[->] (-0.5,0) -- (6,0);
						\draw[->] (0,-0.5) -- (0,6);

						\draw[lightgray, thick] (0,0) -- (2.5,5);
						
						\fill[lightgray, opacity=0.2](0,0) -- (3,6) -- (6,6) --(6,0)  --cycle;
						
						\fill[black] (0,4) node[left] {$2(a-1)$};
						
						\fill[black] (2,0)  node[below] {$a-1$};
						
						\draw[orange, thick] (2,4) -- (3,6);
						\draw[orange, thick] (2,4) -- (2,0);
						\draw[orange, thick] (2,0) -- (6,0);
						
						\pattern[pattern={hatch[hatch size=7pt, hatch linewidth=.2pt, hatch angle=90]} , pattern color=orange](2,4) -- (3,6) -- (6,6) -- (6,0)--(2,0)--cycle;

						\draw[lightgray,  thick, dashed] (0,4) -- (2,4);

					\end{tikzpicture}
					\caption{ Newton polyhedron of $J(g)$.}
					\label{J}
				\end{figure}
				Note that $J(g)$ is non-degenerate, and by Theorem \ref{ICideals} we have 
				\begin{eqnarray*}
					\overline{J(g)} 
					&=& \langle x^a y^b z^c \;:\; a(1,0)+b(1,1)+c(1,2)\in \Gamma(J(g)) \rangle \\
					&=& \langle x^a y^b z^c \;:\; a(1,0)+b(1,1)+c(1,2)\in \Gamma(\mathfrak{m}^{\,a-1}) \rangle \\
					&=& \overline{\mathfrak{m}^{\,a-1}} \;=\; \mathfrak{m}^{\,a-1}.
				\end{eqnarray*}
				
			\end{example}

			\section{The Bi-Lipschitz Invariance of the Euler Obstruction for Hypersurfaces}
			In this section, we study the bi-Lipschitz invariance of the Euler obstruction for hypersurfaces with an isolated singularity defined in a toric variety $X(S)$. Using the formulas of Matsui and Takeuchi \cite{Matsui} for the Euler characteristic via Newton polyhedra, we show that, under the same hypotheses of Corollary \ref{closurejacobian}, this obstruction is preserved under bi-Lipschitz equivalence of ideals.\\

					\begin{theorem}\label{teoeuler}
						Let $f, g \in \mathcal{O}_{X(S)}$ be germs with an isolated singularity at the origin such that $\overline{J(f)} = \mathfrak{m}^{\operatorname{ord}(f)-1}$,  where $\mathfrak{m}$ is the maximal ideal of $\mathcal{O}_{X(S)}$.
						Suppose that $f$ and $g$ are bi-Lipschitz $\mathcal{A}$-equivalent 
						or bi-Lipschitz $\mathcal{K}^*$-equivalent. Then  $$
						Eu_{V(f)}(0)= Eu_{V(g)}(0),
						$$
						where $V(f)$ and $V(g)$ are the hypersurfaces given by the zero sets of $f$ and $g$, respectively.
					\end{theorem}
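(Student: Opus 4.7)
The plan is to reduce the equality of Euler obstructions to the equality of the integral closures of the Jacobian ideals $J(f)$ and $J(g)$, and then to use a Matsui--Takeuchi style combinatorial description of Euler characteristics on the toric ambient $X(S)$ to transform this algebraic input into the equality needed for the Brasselet--L\^e--Seade formula.

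As a first step, I would apply Theorem \ref{Jfbl} to conclude that $J(f)$ and $J(g)$ are bi-Lipschitz equivalent ideals in $\mathcal{O}_{X(S)}$ and that $\operatorname{ord}(f)=\operatorname{ord}(g)=:d$. Combining this with the hypothesis $\overline{J(f)}=\mathfrak{m}^{d-1}$, Corollary \ref{closurejacobian} then gives
$$\overline{J(f)} \;=\; \overline{J(g)} \;=\; \mathfrak{m}^{d-1}.$$
This is the essential algebraic input: $f$ and $g$ share the same order at the origin, and their Jacobian ideals determine the same integrally closed ideal, namely a fixed power of $\mathfrak{m}$.

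As a second step, I would express each Euler obstruction via the Brasselet--L\^e--Seade formula (Theorem \ref{BLS}) as an alternating sum of Euler characteristics of the intersections of a generic linear slice with the strata of $V(f)$ (and $V(g)$) in a suitable Whitney stratification of $X(S)$. For hypersurfaces with isolated singularity inside a toric ambient, each such Euler characteristic can be rewritten, after Matsui and Takeuchi's combinatorial formulas referenced at the start of this section, as an expression in mixed volumes of the Newton polyhedron $\Gamma_{+}(f)$ (respectively $\Gamma_{+}(g)$) and of the faces of $\operatorname{cone}(S)$. The key claim to establish is that the equality $\overline{J(f)}=\overline{J(g)}=\mathfrak{m}^{d-1}$ forces the combinatorial data of $\Gamma_{+}(f)$ and $\Gamma_{+}(g)$ that actually enters these mixed volumes---essentially the top face at level $d$, as in the semi-homogeneous example following Corollary \ref{closurejacobian}---to coincide. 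Under this reduction the Matsui--Takeuchi expression becomes a function only of $d$ and of $\operatorname{cone}(S)$, and the Brasselet--L\^e--Seade sums for $V(f)$ and $V(g)$ match termwise, yielding $Eu_{V(f)}(0)=Eu_{V(g)}(0)$.

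The main obstacle I anticipate is precisely this last translation: bi-Lipschitz $\mathcal{A}$- or $\mathcal{K}^{*}$-equivalence does not a priori preserve $\Gamma_{+}(f)$, so one must identify exactly which polyhedral data is determined by the pair $(d,\overline{J(f)})$ and verify that the Matsui--Takeuchi mixed-volume formula for isolated hypersurface singularities depends only on that data. The resolution I have in mind is to use that $\overline{J(f)}=\mathfrak{m}^{d-1}$ forces the initial form $f_{d}$ of $f$ to have an isolated critical point along the strata of $\operatorname{cone}(S)$ (so that $f$ behaves like a generic semi-homogeneous function of degree $d$), which should reduce the relevant mixed volumes to quantities depending only on $d$ and on the combinatorics of $S$; this common value is then automatically shared with $g$. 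A mild non-degeneracy assumption on $f$ and $g$, implicit in the toric framework of Section~1, is expected to be needed to apply Matsui--Takeuchi at this step.
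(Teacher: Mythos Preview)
Your overall strategy matches the paper's: use Theorem~\ref{Jfbl} and Corollary~\ref{closurejacobian} to obtain $\overline{J(f)}=\overline{J(g)}=\mathfrak{m}^{d-1}$, then feed this into the Matsui--Takeuchi machinery for the Euler characteristic of a generic linear slice. Two points in your proposal, however, are vague where the paper is concrete, and one of them is a genuine gap.

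First, the non-degeneracy you flag as ``a mild assumption, implicit in the toric framework'' is neither assumed nor implicit: it is \emph{derived} from the hypothesis. The paper uses \cite[Corollary~4.8]{ATT}, which characterizes non-degeneracy of an ideal $I$ by the equality $\overline{I}=I^{\circ}$ (the term ideal). From $\overline{J(f)}=\mathfrak{m}^{d-1}$ one checks $J(f)^{\circ}=(\mathfrak{m}^{d-1})^{\circ}=\overline{\mathfrak{m}^{d-1}}=\overline{J(f)}$, so $J(f)$ is non-degenerate; the same computation with $\overline{J(g)}=\overline{J(f)}$ gives the non-degeneracy of $J(g)$. This is precisely what licenses the application of \cite[Theorem~3.12]{Matsui}. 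If you leave non-degeneracy as an extra hypothesis, your argument proves strictly less than the stated theorem.

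Second, your workaround for the fact that bi-Lipschitz equivalence need not preserve $\Gamma_{+}(f)$---arguing via isolated critical points of initial forms along strata---is more indirect than necessary. The paper observes directly that $\Gamma_{+}(J(f))=\Gamma_{+}(\mathfrak{m}^{d-1})$ forces $\Gamma_{+}(f)=\Gamma_{+}(\mathfrak{m}^{d})$, and likewise for $g$, so in fact $\Gamma_{+}(f)=\Gamma_{+}(g)$ on the nose. Once this equality is established, and once generic linear forms $L_f,L_g$ with full support (hence $\Gamma_{+}(L_f)=\Gamma_{+}(L_g)$) are chosen, the Newton polyhedra $\Gamma_{+}(f\cdot L_f)$ and $\Gamma_{+}(g\cdot L_g)$ coincide, and Matsui--Takeuchi yields $\chi((L_f)_0)=\chi((L_g)_0)$ immediately. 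Your reduction to ``quantities depending only on $d$ and the combinatorics of $S$'' is correct in spirit, but the exact equality of Newton polyhedra is both stronger and easier to establish.
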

					\begin{proof} Since $\overline{J(f)} = \mathfrak{m}^{\operatorname{ord}(f)-1}$, we have $\Gamma_+(J(f))=\Gamma_+\!\bigl(\mathfrak{m}^{\operatorname{ord}(f)-1}\bigr).$ Hence,   
						\begin{eqnarray*}
							J(f)^\circ
							&: =& \big\langle x^k : k_1\gamma_1+\cdots+k_r \gamma_r \in \Gamma_+(J(f)) \big\rangle \\
							&=& \big\langle x^k : k_1\gamma_1+\cdots+k_r \gamma_r \in \Gamma_+\!\bigl(\mathfrak{m}^{\operatorname{ord}(f)-1}\bigr) \big\rangle\\
							&=& \bigl(\mathfrak{m}^{\operatorname{ord}(f)-1}\bigr)^\circ .
						\end{eqnarray*}
						Since $\mathfrak{m}^{\operatorname{ord}(f)-1}$ is non-degenerate, Theorem \ref{ICideals} implies that  $\bigl(\mathfrak{m}^{\operatorname{ord}(f)-1}\bigr)^\circ
						= \overline{\mathfrak{m}^{\operatorname{ord}(f)-1}}$.
						Thus $\overline{J(f)} = J(f)^\circ$. Applying  Theorem \ref{ICideals} once more, we conclude that $J(f)$ is a non-degenerate ideal.
						
						By Theorem \ref{Jfbl}, it follows that $J(f)$ and $J(g)$ are bi-Lipschitz equivalent. Since  $\overline{J(f)} =\mathfrak{m}^{\operatorname{ord}(f)-1}$, the Corollary \ref{closurejacobian} implies that $\overline{J(f)} = \overline{J(g)}$.  Hence $\Gamma_{+}(J(f)) = \Gamma_{+}(J(g))$. Note that the equality $\Gamma_{+}(J(f)) = \Gamma_{+}(\mathfrak{m}^{\operatorname{ord}(f)-1})$ implies that $\Gamma_{+}(f)=\Gamma_{+}(\mathfrak{m}^{\operatorname{ord}(f)})$. Applying the same argument to $g$, we conclude that $\Gamma_{+}(f)=\Gamma_{+}(g)$.
						
						Moreover,
						\begin{eqnarray*}
							J(g)^\circ
							&:=& \big\langle x^k : k_1\gamma_1+\cdots+k_r \gamma_r \in \Gamma_+(J(g)) \big\rangle \\
							&=& \big\langle x^k : k_1\gamma_1+\cdots+k_r \gamma_r \in \Gamma_+(J(f)) \big\rangle \\
							&=& \bigl(\mathfrak{m}^{\operatorname{ord}(f)-1}\bigr)^\circ\\
							&=& J(f)^\circ\\
							&=& \overline{J(f)}\\
							&=& \overline{J(g)},
						\end{eqnarray*}
						then, applying Theorem \ref{ICideals} once more, we conclude that $J(g)$ is also an ideal non-degenerate. Thus, $f$ and $g$ are non-degenerate functions in the sense of \cite[Definition $3.2$]{Matsui}.
						
						Now, let $L_f, L_g : \mathbb{C}^r \to \mathbb{C}$ be generic  linear forms with respect to $V(f)$ and $V(g)$, respectively. Without loss of generality, we can assume that
						$$L_g(x) = a_1 x_1 + \cdots + a_r x_r, \quad a_i \neq 0, \quad \text{for all i}$$
						$$L_g(x) = b_1 x_1 + \cdots + b_r x_r, \quad b_i \neq 0, \quad \text{for all i}$$
						so that $\Gamma_{+}(L_f) = \Gamma_{+}(L_g)$.
						
						Consider the following subvarieties of $X(S)$
						\begin{equation}
							V(f,L_f) := \{f= L_f = 0\} \subset
							V(f) := \{f = 0\},
						\end{equation}
						\begin{equation}
							V(g,L_g) := \{g= L_g = 0\} \subset
							V(g) := \{g = 0\}.
						\end{equation}
						Since $f$ and $g$ are non-degenerate functions, for $L_f$ and $L_g$ generic enough, we have $(f,L_f)$ and $(g,L_g)$ as non-degenerate complete intersections, as in \cite[Definition $3.9$]{Matsui} (see also \cite{oka}).
						
						Therefore, by \cite[Theorem 3.12]{Matsui}, the Euler characteristic of the Milnor fiber $(L_f)_0$ of the function $L_f|_{V(f)}: V(f) \to \mathbb{C}$ at the point $0 \in V(f)$ is given as a sum of volumes of the Newton polyhedron associated to the function $f \cdot L_f$. 
						
						Since $\Gamma_{+}(f) = \Gamma_{+}(g)$ and $\Gamma_{+}(L_f) = \Gamma_{+}(L_g)$, it follows that $\Gamma_{+}(f \cdot L_f) = \Gamma_{+}(g \cdot L_g)$. Consequently, we conclude that $\chi((L_f)_0) = \chi((L_g)_0)$.
						
						Moreover, since both $V(f)$ and $V(g)$ have isolated singularities, their local Euler obstructions at the origin coincide. Therefore, the Euler obstructions of $V(f)$ and $V(g)$ are equal.    
			\end{proof}
			
			In \cite{BFS} Bobadilla, Fernandes, and Sampaio proved the following theorem.
			
			\begin{theorem}\label{BFS}
				Let $X\subset \mathbb{C}^{N+1}$ and $Y\subset \mathbb{C}^{M+1}$ be two complex analytic surfaces. If $(X,0)$ and $(Y,0)$ are bi-Lipschitz homeomorphic, then $m_0(X,0)=m_0(Y,0)$.
			\end{theorem}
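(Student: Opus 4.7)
The approach is to reduce the theorem to a statement about tangent cones and then exploit the rigidity specific to complex dimension two.

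First, I would invoke the classical identity
$$m_0(X,0) \;=\; \sum_{i} m_i\,\deg(C_i),$$
where $C_0 X = \bigcup_i C_i$ is the decomposition of the tangent cone at $0$ into irreducible components with analytic multiplicities $m_i$, and analogously for $Y$. The problem therefore reduces to proving that this weighted degree of the tangent cone is a bi-Lipschitz invariant.

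Second, I would apply Sampaio's tangent-cone theorem: any bi-Lipschitz homeomorphism $h:(X,0)\to (Y,0)$ of real analytic germs, in possibly different ambient Euclidean spaces, induces a bi-Lipschitz homeomorphism $dh:(C_0X,0)\to(C_0Y,0)$ of the set-theoretic tangent cones. The proof proceeds by an Arzelà--Ascoli argument applied to the equi-Lipschitz family of rescalings $h_t(x)=h(tx)/t$ as $t\to 0^{+}$. The fact that no ambient extension of $h$ is required is essential here, since the statement of the theorem only posits abstract bi-Lipschitz equivalence between $(X,0)$ and $(Y,0)$.

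Third, and this is the hardest part, I would show that bi-Lipschitz equivalence of complex analytic cones of complex dimension $2$ preserves the weighted degree $\sum_i m_i\,\deg(C_i)$. The dimension hypothesis is unavoidable: the Birbrair--Fernandes--Sampaio--Verbitsky counterexamples referenced in the paper demonstrate that the analogous statement already fails in complex dimension $3$. For surfaces one studies the link $C_0X\cap S^{2N+1}_{\varepsilon}$, a real $3$-manifold whose Seifert structure records both the decomposition into irreducible components and the multiplicity of each. One then matches the components of $C_0X$ and $C_0Y$ using $dh$ and extracts their multiplicities from bi-Lipschitz invariants of the links, via Lipschitz normal embeddings and the Birbrair--Neumann--Pichon classification of normal complex surface singularities up to Lipschitz equivalence.

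The main obstacle is this last step. Since $dh$ need not be complex analytic, the weighted degree $\sum_i m_i\,\deg(C_i)$ cannot be recovered by algebraic means; it must be extracted entirely from the outer metric structure of the cone, and this requires a delicate identification of multiplicities of irreducible components of $C_0X$ and $C_0Y$ purely from the Lipschitz geometry of their $3$-dimensional links. The tractability in complex dimension two comes precisely from the fact that a rich classification of surface cones up to Lipschitz equivalence is available, whereas no such control exists in higher dimension.
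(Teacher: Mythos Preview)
The paper does not prove this theorem. It is quoted from Bobadilla--Fernandes--Sampaio \cite{BFS} and immediately used as a black box to derive the subsequent corollary on $m_1$; there is no argument in the present paper against which to compare your sketch.

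For completeness: your outline does track the actual strategy of \cite{BFS}---Sampaio's tangent-cone theorem, the decomposition $m_0(X,0)=\sum_i k_i\deg(C_i)$ into irreducible components of $C_0X$ with relative multiplicities, and a link-level argument specific to complex dimension two. The one inaccuracy is in the final step: \cite{BFS} does not invoke the Birbrair--Neumann--Pichon inner Lipschitz classification. The relative multiplicities $k_i$ are extracted directly by counting connected components of $X$ inside small conical neighbourhoods of the $C_i$ near the link sphere (a purely metric construction, hence bi-Lipschitz invariant), and the degree of each irreducible two-dimensional cone is recovered from the topology of its three-dimensional link, essentially as the Euler number of the associated circle bundle over the projectivised curve. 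So your plan is correct in spirit but appeals to heavier machinery than is actually used.
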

			
			Then, as a Corollary of this result and from Theorems \ref{teoeuler} and \ref{lt}, we obtain.
			
			\begin{corollary}
				Let $f, g \in \mathcal{O}_3$ be germs with an isolated singularity at the origin such that $\overline{J(f)} = \mathfrak{m}^{\operatorname{ord}(f)-1}$.
				Suppose that $f$ and $g$ are bi-Lipschitz $\mathcal{A}$-equivalent 
				or bi-Lipschitz $\mathcal{K}^*$-equivalent. Then  $m_1(X,0)=m_1(Y,0)$, where $X=f^{-1}(0)$ and $Y=g^{-1}(0)$ .
			\end{corollary}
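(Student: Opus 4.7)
The plan is to combine three ingredients that are already assembled immediately above the statement: the Lê–Teissier polar formula (Theorem \ref{lt}), the Euler obstruction invariance (Theorem \ref{teoeuler}), and the Bobadilla–Fernandes–Sampaio theorem for surfaces. Since $X=V(f)$ and $Y=V(g)$ sit in $\mathbb{C}^3$ and $f,g$ have isolated singularities, both germs are $2$-dimensional, so the polar decomposition of the Euler obstruction is short enough to be inverted for $m_1$.

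First I would specialize the Lê–Teissier formula to dimension $d=2$. With $d=2$, the formula reads
\[
\mathrm{Eu}_{X}(0) \;=\; \sum_{k=0}^{1}(-1)^{1-k}\,m_k(X) \;=\; -m_0(X)+m_1(X),
\]
and the same expression holds for $Y$. Solving for the top polar multiplicity gives $m_1(X,0)=\mathrm{Eu}_{X}(0)+m_0(X,0)$ and likewise for $Y$, so it suffices to show that both the local Euler obstruction and the classical multiplicity agree.

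Next I would apply Theorem \ref{teoeuler} to the pair $(f,g)$: the hypotheses of the corollary (isolated singularity, $\overline{J(f)}=\mathfrak{m}^{\operatorname{ord}(f)-1}$, and bi-Lipschitz $\mathcal A$- or $\mathcal K^{*}$-equivalence) are exactly those of that theorem applied in the toric case $S=\langle e_1,e_2,e_3\rangle$, i.e.\ $X(S)=\mathbb{C}^3$. This yields $\mathrm{Eu}_{X}(0)=\mathrm{Eu}_{Y}(0)$. For the multiplicities, I would invoke the cited Bobadilla–Fernandes–Sampaio theorem: bi-Lipschitz $\mathcal A$- or $\mathcal K^{*}$-equivalence of $f$ and $g$ induces a bi-Lipschitz homeomorphism of the germs $(X,0)$ and $(Y,0)$ (recorded in Section~1 as the remark that bi-Lipschitz $\mathcal R$-equivalence, and more generally these equivalences, induce bi-Lipschitz equivalence of zero sets), and since $X,Y\subset\mathbb{C}^3$ are complex analytic surfaces, that theorem gives $m_0(X,0)=m_0(Y,0)$.

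Plugging both equalities into the inverted Lê–Teissier formula then yields $m_1(X,0)=m_1(Y,0)$, which is the claim. The only point that needs a line of justification is that $\mathcal A$- and $\mathcal K^{*}$-equivalence of the defining functions indeed induce a bi-Lipschitz homeomorphism of the zero sets at the level required by the Bobadilla–Fernandes–Sampaio theorem; this is routine since in both cases one of the ambient bi-Lipschitz maps $\varphi$ sends $V(f)$ onto $V(g)$. I expect this to be the only step requiring a brief comment, with the rest being an assembly of previously established results.
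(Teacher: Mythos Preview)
Your proposal is correct and follows exactly the approach the paper intends: the corollary is obtained by combining the Lê--Teissier formula (Theorem~\ref{lt}) in dimension $d=2$, Theorem~\ref{teoeuler}, and the Bobadilla--Fernandes--Sampaio theorem, precisely as you do. One small inaccuracy: Section~1 only records the zero-set statement for bi-Lipschitz $\mathcal{R}$-equivalence (not for $\mathcal{A}$- or $\mathcal{K}^*$-equivalence), but as you note, the extension is routine since in both cases the ambient map $\varphi$ carries $V(f)$ onto $V(g)$.
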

			\begin{proof}
				Let $f,g \in \mathcal{O}_3$ be germs with isolated singularities at the origin such that 
				$J(f)=\mathfrak{m}^{\mathrm{ord}(f)-1}$, and suppose that $f$ and $g$ are bi-Lipschitz 
				$\mathcal{A}$-equivalent or bi-Lipschitz $\mathcal{K}^*$-equivalent.
				Set $X = f^{-1}(0)$ and $Y = g^{-1}(0)$. Then $X$ and $Y$ are hypersurfaces in $\mathbb{C}^3$, 
				hence complex analytic surfaces.
				By Theorem  \ref{teoeuler}, we have
				$$\mathrm{Eu}_X(0) = \mathrm{Eu}_Y(0).$$
				On the other hand, by Theorem \ref{BFS}, since $(X,0)$ and $(Y,0)$ are bi-Lipschitz homeomorphic, 
				their multiplicities coincide: $$m_0(X,0) = m_0(Y,0).$$
				Now, using the Lê--Teissier formula (Theorem \ref{lt}), for surfaces we have
				$$\mathrm{Eu}_X(0) = (-1)^{2-0-1} m_0(X,0) + (-1)^{2-1-1} m_1(X,0)
				= - m_0(X,0) + m_1(X,0),$$
				and similarly,
				$$\mathrm{Eu}_Y(0) = - m_0(Y,0) + m_1(Y,0).$$
				Since $\mathrm{Eu}_X(0) = \mathrm{Eu}_Y(0)$ and $m_0(X,0)=m_0(Y,0)$, it follows that
				$$- m_0(X,0) + m_1(X,0) = - m_0(Y,0) + m_1(Y,0),$$
				and therefore
				$m_1(X,0) = m_1(Y,0).$
			\end{proof}
			
		\begin{example}
			Let $f(x,y,z) = x^{2} + y^{a} + z^{2} \in \mathcal{O}_{3}$,
		where $a\geq2$, and consider the germ
		$$g(x,y,z) = x^2 + (y + xy)^a + (z + 2y^2 + xy^2)^2 +2(y^2-xz) \in \mathcal{O}_{3}.$$
		
		The quadratic part of $g$ is $(x - z)^2 + 2y^2$, whose critical locus is given by $\{x = z,\ y = 0\}$. A direct computation shows that this curve is contained in $\mathrm{Sing}(V(g))$. Hence $\mathrm{Sing}(V(g))$ has dimension at least one, whereas $\mathrm{Sing}(V(f))$ is isolated.
		
		Since the dimension of the singular locus is preserved under bi-Lipschitz homeomorphisms and
		$$\dim \mathrm{Sing}(V(f)) \neq \dim \mathrm{Sing}(V(g)),$$
		it follows that the germs $(V(f),0)$ and $(V(g),0)$ are not bi-Lipschitz equivalent. In particular, $f$ and $g$ are not bi-Lipschitz $\mathcal{R}$-equivalent.

		We now consider the same germs in the toric setting. Let $X(S)$ be the toric variety associated with the semigroup $S=\{(1,0),(1,1),(1,2)\}$. When $f$ and $g$ are regarded as elements of $\mathcal{O}_{X(S),0}$, the situation changes significantly. Indeed, since $y^2 - xz = 0$ on $X(S)$, the term $2(y^2 - xz)$ vanishes identically. Moreover, the map
		$$\varphi(x,y,z) = \big(x,\; y+xy,\; z+2y^2+xy^2\big)$$
		defines a local bi-Lipschitz homeomorphism of $(X(S),0)$. Therefore,
		$$g = f \circ \varphi \quad \text{in } \mathcal{O}_{X(S),0}.$$
		It follows that $f$ and $g$ are bi-Lipschitz $\mathcal{R}$-equivalent on $(X(S),0)$, and therefore also bi-Lipschitz $\mathcal{A}$-equivalent. 	Consequently, by Theorem \ref{teoeuler},
		$$
		Eu_{V(f)}(0) = Eu_{V(g)}(0).$$
		
		\end{example}
			\begin{example}
				Consider $f(x,y,z) = x^2 + y^2 + z^2 \in \mathcal{O}_3$. We have $\operatorname{ord}(f) = 2$.  The Jacobian ideal is $J(f) = \mathfrak{m}$. Let $\Phi(x,y,z) = (x+y,\, y,\, z).$
				Define $$g(x,y,z) = f(\Phi(x,y,z)) = (x+y)^2 + y^2 + z^2.$$
				Since $\Phi$ is a linear invertible transformation, it is bi-Lipschitz, and thus $f$ and $g$ are bi-Lipschitz equivalent. Therefore, all the assumptions of Theorem \ref{teoeuler} are satisfied, and we conclude that $$Eu_{V(f)}(0)= Eu_{V(g)}(0).$$
			\end{example}

			\begin{center}
				{ \bf Acknowledgments}
			\end{center}
			
			We would like to thank Alexandre Fernandes and Carles Bivià-Ausina for all the helpful discussions they provided throughout the development of this work.


		\end{document}